\documentclass[a4paper,10pt]{article}
\usepackage[T1]{fontenc}
\usepackage[cp1250]{inputenc}
\usepackage[english]{babel}
\usepackage{amsmath}
\usepackage{amsfonts}
\usepackage{amssymb}
\usepackage{mathrsfs}
\usepackage{amsthm}
\usepackage{graphicx}

\usepackage{color}
\usepackage{euscript}
\usepackage{cite}
\usepackage{mathtools}
\usepackage{enumerate}

\newcommand{\eps}{\varepsilon}

\DeclareMathOperator{\I}{I}

\renewcommand{\leq}{\leqslant}
\renewcommand{\geq}{\geqslant}

\newcommand{\R}{\mathbb{R}}

\DeclareMathOperator{\Lip}{Lip}

\newcommand{\scalprod}[2]{\langle{#1},{#2}\rangle}
\textwidth=16cm
\oddsidemargin=0pt
\topmargin=0pt

\newcommand{\eq}[1]{\begin{equation}{#1}\end{equation}}
\newcommand{\mlt}[1]{\begin{multline}{#1}\end{multline}}
\newcommand{\alg}[1]{\begin{align}{#1}\end{align}}

\newcommand{\set}[2]{\{{#1}\mid{#2}\}}
\newcommand{\Set}[2]{\Big\{{#1}\,\Big|\;{#2}\Big\}}
\newcommand{\fdot}{\,\cdot\,}

\newtheorem{Le}{Lemma}

\newtheorem{St}[Le]{Proposition}
\newtheorem{Th}{Theorem}
\newtheorem{Cor}[Le]{Corollary}
\newtheorem{Rem}[Le]{Remark}

\begin{document}

\title{On Fourier multipliers with rapidly oscillating symbols}

\author{Dmitriy Stolyarov\thanks{Supported by the Russian Science Foundation grant 19-71-30002.}}

\maketitle

%\subjclass{Primary 60G46; Secondary 46E35}

\begin{abstract}
We provide asymptotically sharp bounds for the~$L_p$ norms of the Fourier multipliers with symbols~$e^{i\lambda \varphi(\xi/|\xi|)}$, where~$\lambda\in \R$ is a large parameter.
\end{abstract}

\section{Setting}
The motivation for writing this note comes from Maz'ja's Problem $4.2$ in~\cite{Mazya2018}. We provide an imprecise citation.

\emph{'Consider the singular integral operator~$A_\lambda$ with the symbol~$\partial B_1 \ni \omega \mapsto \exp(i\lambda \varphi(\omega))$,
where~$\varphi$ is a smooth real-valued function on~$\partial B_1$, and~$\lambda$ is a large real parameter. Find the sharp value of the parameter~$\kappa$ such that the estimate 
\eq{
\|A_\lambda\|_{L_p(\R^d)\to L_p(\R^d)} \leq c|\lambda|^{\kappa|\frac12-\frac{1}{p}|},
}
holds true; here~$1 < p < \infty$, $c$ depends on~$d, p$, and the function $\varphi$.'} 

This estimate is clearly true when~$d=1$ and~$\kappa=0$. We will solve the problem for larger~$d$ by proving the following results\footnote{The paper~\cite{Mazya2018} suggested~$\kappa = d-1$. It might be the case that the value~$d-1$ instead of~$d$ appeared in~\cite{Mazya2018} occasionally, because the leading term of the estimate in~\cite{Maz'jaHaikin1975} was~$|\lambda|^{d|1/2-1/p|}$.}.
\begin{St}\label{Example}
Let~$d\geq 2$. There exists a smooth~$\varphi$ defined on the unit sphere in~$\R^d$ and  a tiny constant~$C$ such that
\eq{\label{CounterExampleIneq}
\|A_\lambda\|_{L_p\to L_p} \geq C|\lambda|^{d|\frac12-\frac{1}{p}|},
}
provided~$|\lambda|$ is sufficiently large. 
\end{St}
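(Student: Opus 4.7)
Choose a smooth Morse function $\varphi$ on $S^{d-1}$, for instance $\varphi(\omega)=\omega_1$, fix a smooth bump $\chi\in C_c^\infty([1,2])$, and define the test function $f$ by
\[
\widehat f(\xi) \;=\; e^{-i\lambda\varphi(\xi/|\xi|)}\,\chi(|\xi|).
\]
Then $A_\lambda f = \mathcal F^{-1}[\chi(|\xi|)]$ is a fixed radial Schwartz function, so $\|A_\lambda f\|_{L_p}\asymp 1$ uniformly in $\lambda$, and the whole matter reduces to an upper bound on $\|f\|_{L_p}$. By the duality $\|A_\lambda\|_{L_p\to L_p}=\|A_{-\lambda}\|_{L_{p'}\to L_{p'}}$ (applied to the same construction with $-\lambda$, which is of the same form) one may assume $p>2$, in which case the goal becomes $\|f\|_{L_p}\lesssim \lambda^{-d(1/2-1/p)}$.

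To estimate $\|f\|_{L_p}$ I would work in polar coordinates. The angular integral over $S^{d-1}$ carries the phase $r\,x\cdot\omega-\lambda\varphi(\omega)$ and is analyzed by stationary phase at its non-degenerate critical points, which produces an amplitude of order $\lambda^{-(d-1)/2}$ and leaves a one-dimensional oscillatory integral in the radial variable $r$ with a quadratic phase. A further stationary phase in $r$ then supplies an additional $\lambda^{-1/2}$. The outcome is that $f$ is a Lagrangian distribution, concentrated on a $d$-dimensional region of volume $\asymp\lambda^d$ — essentially an $O(\lambda)$-neighbourhood of the image of $\xi\mapsto\lambda\nabla_\xi[\varphi(\xi/|\xi|)]$ — on which the amplitude is $\asymp \lambda^{-d/2}$. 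This picture is consistent with the Plancherel identity $\|f\|_{L_2}\asymp\|\chi\|_{L_2}\asymp 1$. Integrating gives
\[
\|f\|_{L_p}\;\asymp\;\lambda^{-d/2}\,(\lambda^d)^{1/p}\;=\;\lambda^{-d(1/2-1/p)},
\]
whence $\|A_\lambda f\|_{L_p}/\|f\|_{L_p}\gtrsim |\lambda|^{d(1/2-1/p)}=|\lambda|^{d|1/2-1/p|}$, as claimed.

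The principal obstacle is to rigorously establish the amplitude $\lambda^{-d/2}$ on the full $d$-dimensional region, since a naive single stationary phase on the sphere produces only $\lambda^{-(d-1)/2}$ on a set of bounded volume and would recover just Maz'ya's weaker exponent $d-1$. The crucial extra $\lambda^{-1/2}$ must be extracted from the radial variable, where the angular stationary phase leaves a quadratic chirp whose own stationary point supplies the missing factor. For the model $\varphi(\omega)=\omega_1$ this can be made completely explicit: the spherical integral collapses to $2\pi J_0(A)$ with $A=\sqrt{\lambda^2-2\lambda r x_1+r^2|x|^2}$, and the combination of the large-argument asymptotic of $J_0$ away from the caustic half-line $\{x_2=\cdots=x_d=0,\ x_1>0\}$ with the correct Airy/caustic expansion near it confirms both the amplitude $\asymp\lambda^{-d/2}$ and the support of volume $\asymp\lambda^d$. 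The same strategy carries over to a general Morse $\varphi$, the non-trivial work being a careful tracking of the caustic geometry.
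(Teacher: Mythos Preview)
Your reduction is clean, but the direction of the estimate you chose creates a genuine obstruction. For $p>2$ you need the \emph{upper} bound $\|f\|_{L_p}\lesssim\lambda^{-d(1/2-1/p)}$, and this fails for large $p$ once $d\geq 3$. The phase $\xi\mapsto\varphi(\xi/|\xi|)$ is homogeneous of degree zero, so its Hessian always annihilates the radial direction and the Lagrangian projection $\xi\mapsto\lambda\nabla_\xi\varphi(\xi/|\xi|)$ is never a local diffeomorphism onto $\R^d$; caustics are unavoidable. For your model $\varphi(\omega)=\omega_1$ the worst one sits over the segment $\{x'=0,\ x_1\asymp\lambda\}$: at $x=ce_1$ the stationary set in $\xi$ is the entire $(d-2)$-sphere $\{\xi_1=0,\ |\xi|=\lambda/(2\pi c)\}$, the transverse Hessian has rank only $2$, and one gets $|f(ce_1)|\asymp\lambda^{-1}$ rather than $\lambda^{-d/2}$ (for $d=3$ this is the explicit computation $\int r^{2}\chi(r)\,\sin(2\pi cr-\lambda)/(2\pi cr-\lambda)\,dr\asymp c^{-1}$). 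This already kills the endpoint $\|f\|_{L_\infty}\lesssim\lambda^{-d/2}$, and a unit-width tube around the segment gives $\|f\|_{L_p}^p\gtrsim\lambda^{1-p}$, which dominates the claimed $\lambda^{d(1-p/2)}$ as soon as $p>2(d-1)/(d-2)$. No Airy expansion repairs this: it regularizes the transition across the caustic but does not bring the amplitude down to $\lambda^{-d/2}$.

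The paper sidesteps the issue by reversing the roles. It fixes the test function (the indicator of a small ball $g$) and proves the \emph{lower} bound $\|M_\lambda g\|_{L_p}\gtrsim\lambda^{d(1/p-1/2)}$ for $p\leq 2$, which only needs $|\hat M_\lambda|\gtrsim\lambda^{-d/2}$ on \emph{some} region of volume $\gtrsim\lambda^d$ --- precisely what your stationary-phase picture does establish away from the caustics. It also inserts an angular cutoff $\tilde\chi$ so as to work on a single nondegenerate branch and make the asymptotics clean. Your plan becomes correct with one swap: test with $g=\mathcal F^{-1}[\chi(|\cdot|)]$ instead, so that $\|A_\lambda g\|_{L_p}$ is the $L_p$ norm of a function of the same type as $f$, and use the lower bound on that function over the main region to conclude for $p<2$; duality then handles $p>2$. (A minor slip: the spherical integral is $c_d\,A^{-(d-2)/2}J_{(d-2)/2}(A)$ in dimension $d$, not $2\pi J_0(A)$.)
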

The case~$d=2$ of Proposition~\ref{Example} had been implicitly considered in~\cite{DPV2006} (see Theorem~$6$ of that paper). We also prove that the inequality reverse to~\eqref{CounterExampleIneq} always holds true. 
\begin{Th}\label{Inequality}
Let~$d$,~$p$, and~$\varphi$ be fixed. There exists a constant~$c$ such that
\eq{
\|A_\lambda\|_{L_p(\R^d)\to L_p(\R^d)} \leq c|\lambda|^{d|\frac12-\frac{1}{p}|}.
}
\end{Th}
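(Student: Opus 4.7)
The plan is to deduce Theorem~\ref{Inequality} from the trivial $L^2$-bound and a weak-type endpoint by interpolation. Plancherel gives $\|A_\lambda\|_{L^2 \to L^2} = 1$ because $|m_\lambda| \equiv 1$; moreover the adjoint of $A_\lambda$ is an operator of the same form with $\lambda$ replaced by $-\lambda$. Marcinkiewicz interpolation combined with duality therefore reduces the theorem to the endpoint weak-type estimate
\begin{equation*}
\|A_\lambda\|_{L^1(\R^d) \to L^{1,\infty}(\R^d)} \lesssim |\lambda|^{d/2}.
\end{equation*}

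For the weak-type bound I would apply the Calder\'on--Zygmund decomposition of $f \in L^1$ at height $\alpha$, writing $f = g + \sum_j b_j$ with $g \in L^2$, $\|g\|_2^2 \lesssim \alpha\|f\|_1$, and $b_j$ mean-zero supported on disjoint cubes $Q_j$ with $\sum_j |Q_j| \lesssim \|f\|_1/\alpha$. The crucial modification is to take the exceptional set as the union of $|\lambda|^{1/2}$-dilates of the~$Q_j$ rather than a universal enlargement: this set has measure $\lesssim |\lambda|^{d/2}\|f\|_1/\alpha$, matching exactly the loss permitted by the weak-type inequality. The good part is handled by $\|A_\lambda g\|_{L^2} \leq \|g\|_{L^2}$ and Chebyshev. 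The bad part, via the standard mean-zero cancellation trick, reduces to the H\"ormander-type smoothness estimate
\begin{equation*}
\sup_{|y|\leq r} \int_{|x|\geq c|\lambda|^{1/2} r} \bigl|K_\lambda(x-y) - K_\lambda(x)\bigr|\, dx \leq C,
\end{equation*}
uniform in $r>0$ and $\lambda$, where $K_\lambda$ is the convolution kernel of $A_\lambda$.

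The distribution $K_\lambda$ is smooth on $\R^d \setminus \{0\}$, homogeneous of degree $-d$, of the form $K_\lambda(x) = \Omega_\lambda(x/|x|)/|x|^d$, where $\Omega_\lambda$ is determined from $e^{i\lambda\varphi}$ by a spherical principal-value integral transform (against the distributional kernel $(\eta\cdot\omega)^{-d}$). Since $|y|/|x|\lesssim |\lambda|^{-1/2}$ on the integration region, a first-order Taylor expansion reduces the kernel estimate to the spherical bound
\begin{equation*}
\|\Omega_\lambda\|_{L^1(S^{d-1})} + \|\nabla_S \Omega_\lambda\|_{L^1(S^{d-1})} \lesssim |\lambda|^{1/2}.
\end{equation*}

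Establishing this last inequality is the main obstacle. I would decompose $S^{d-1}$ into caps of radius $|\lambda|^{-1/2}$---the uncertainty-principle scale at which the quadratic part of the phase $\lambda\varphi$ contributes an $O(1)$ oscillation---and treat caps far from the equator $\{\eta\cdot\omega=0\}$ separately from those close to it. On the former, a routine stationary-phase estimate is applied on each cap and the contributions are summed; the subtle point is that cancellation must be exploited to keep the sum at $|\lambda|^{1/2}$ instead of the naive $|\lambda|^{(d-1)/2}$ coming from the total number of caps. Near the equator the singularity $(\eta\cdot\omega)^{-d}$ is absorbed by repeated integration by parts against the oscillating factor $e^{i\lambda\varphi}$, relying on the non-degeneracy of $\nabla_S\varphi$ at generic equator points. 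The delicacy of this bookkeeping, balancing the kernel singularity against the phase oscillation so that the right powers of $|\lambda|$ come out, is what produces the sharp exponent $d/2$.
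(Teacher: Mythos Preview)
Your overall scheme---weak~$(1,1)$ with constant~$|\lambda|^{d/2}$ via Calder\'on--Zygmund with~$|\lambda|^{1/2}$-dilated exceptional cubes, then Marcinkiewicz and duality---is a reasonable route, and the reduction to the Hörmander-type condition is standard. But the argument is incomplete at exactly the point where the real work lies: the claimed spherical estimate
\[
\|\Omega_\lambda\|_{L^1(S^{d-1})}+\|\nabla_S\Omega_\lambda\|_{L^1(S^{d-1})}\lesssim |\lambda|^{1/2}.
\]
You yourself note that the naive cap-by-cap count gives~$|\lambda|^{(d-1)/2}$ and that some unspecified ``cancellation'' is needed to reach~$|\lambda|^{1/2}$; for~$d\ge 3$ this is a gap of order~$|\lambda|^{(d-2)/2}$, and nothing in the outline explains how to close it. Moreover, the near-equator step invokes ``non-degeneracy of~$\nabla_S\varphi$ at generic equator points'', which is not part of the hypothesis: the theorem is stated for an arbitrary smooth~$\varphi$, and any stationary-phase analysis of the spherical integral defining~$\Omega_\lambda$ will run into trouble when~$\varphi$ has degenerate critical behaviour. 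So as written the proposal does not establish the theorem.

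The paper avoids kernel analysis entirely. It quotes Seeger's sharp multiplier theorem, which bounds~$\|M\|_{\mathcal H_1\to L_{1,r}}$ by~$\sup_{t>0}\|\Phi(\cdot)m(t\,\cdot)\|_{B_2^{d/2,1}}$, interpolates this with the trivial~$L_2$ bound, and is left with showing~$\|\Phi(\xi)e^{i\lambda\varphi(\xi/|\xi|)}\|_{B_2^{d/2,1}}\lesssim|\lambda|^{d/2}$. That in turn follows from the elementary differentiation estimate~$\|\partial^\alpha(\Phi e^{i\lambda\varphi})\|_{L_\infty}\lesssim|\lambda|^{|\alpha|}$ (hence~$\|\cdot\|_{W_2^d}\lesssim|\lambda|^d$ on the support of~$\Phi$) together with the multiplicative inequality~$\|G\|_{B_2^{d/2,1}}\lesssim\|G\|_{L_2}^{1/2}\|G\|_{W_2^d}^{1/2}$. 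This route uses only the smoothness of~$\varphi$, needs no structural or non-degeneracy assumptions, and the Besov endpoint~$B_2^{d/2,1}$ (rather than~$W_2^s$ with~$s>d/2$) is precisely what removes the logarithmic loss present in the earlier Maz'ja--Haikin bound.
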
 
A similar bound
\eq{
\|A_\lambda\|_{L_p(\R^d)\to L_p(\R^d)} \leq c|\lambda|^{d|\frac12-\frac{1}{p}|}\big(\log |\lambda|\big)^{|2/p-1|},\qquad |\lambda| \geq 2,
}
was proved in~\cite{Maz'jaHaikin1975}, where such type bounds were utilized to obtain sufficient conditions for the~$L_p$-continuity of the operators~$f\mapsto \int a(\cdot,\xi)\hat{f}(\xi)\,d\xi$. We will prove Proposition~\ref{Example} in Sections~\ref{S2} and~\ref{S2,5} by an accurate computation that resembles the stationary phase method; the first of the two sections deals with the simpler case~$d=2$ and the other covers the more involved case of larger~$d$ (the presentation in Section~\ref{S2,5} is slightly less detailed). Though the computations in the case~$d \geq 3$ are longer, they follow the same route as in the~$d=2$ case. The proof of Theorem~\ref{Inequality} is a combination of the sharp multiplier theorem in the spirit of Mikhlin and Hormander from~\cite{Seeger1988} and soft interpolation techniques, see the details in Section~\ref{S3}.

Some results of this paper were independently obtained in~\cite{BuljKovac2022}; namely, Theorem~\ref{Inequality} and the cases of even~$d$ in Proposition~\ref{Example} are considered there. We note that~\cite{BuljKovac2022} suggests a more fundamental treatment of the whole circle of problems similar to Maz'ja's problem discussed here.  I wish to thank Vladimir Maz'ja for attracting my attention to the problem and fruitful discussions, and to Vjekoslav Kova\v{c} for providing me with the references~\cite{BuljKovac2022} and~\cite{DPV2006}. 

\section{Example for~$d=2$}\label{S2}
Let~$\Phi$ be a non-negative smooth function of a single variable supported in~$[1/2,3/2]$ and non-zero on~$[2/3,4/3]$. Let~$\chi$ be another smooth function supported in~$[-1/2,1/2]$, non-zero on~$[-1/3,1/3]$, and not exceeding one everywhere. Finally, assume that~$\varphi\colon \R\to \R$ is a~$2\pi$-periodic smooth function whose value coincides with~$2\pi \theta$ when~$\theta \in [-1,1]$. Consider the function~$M_\lambda \colon \R^2\to \R$,
\eq{
M_\lambda(\xi,\eta) = e^{i\lambda \varphi(\theta)}\chi(\theta)\frac{\Phi(\rho)}{\rho},\quad \hbox{where}\quad \xi = \rho\cos \theta, \eta = \rho\sin\theta,\quad \theta \in (-\pi,\pi),\ \hbox{and}\ \rho > 0,
}
(we have the polar change of variables in mind). We reserve the names~$x,y$ for the variables on the 'real' Fourier side. We also set
\eq{
x = r\sin\theta_{x,y}\quad \hbox{and}\quad y = r\cos\theta_{x,y},\quad \hbox{where}\ \theta_{x,y}\in (-\pi,\pi)\ \text{and}\ r > 0,
}
for the dual polar coordinates (we interchange the cosine and sine functions for convenience of further computations).
\begin{Le}\label{AsympFormulad=2}
The inequality
\eq{\label{AsymptoticFormula}
\Big|r\hat{M}_\lambda(x,y) - e^{-2\pi i \lambda \theta_{x,y}}\Phi(\lambda/r)\chi(-\theta_{x,y})\Big| \leq 0.1
}
holds true provided~$r\in [3\lambda/4,3\lambda/2]$,~$\theta_{x,y}\in [-1/10, 1/10]$, and~$\lambda$ is sufficiently large.
\end{Le}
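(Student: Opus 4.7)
I would write $\hat M_\lambda(x,y)$ as a two-dimensional oscillatory integral in polar coordinates and apply the stationary phase method. Using $d\xi\, d\eta = \rho\, d\rho\, d\theta$ together with the identity $\xi x + \eta y = r\rho\sin(\theta + \theta_{x,y})$ (obtained from the two polar parametrizations via the sine addition formula), and the Fourier convention $\hat f(\xi)=\int f(x)e^{-2\pi i x\cdot\xi}\,dx$ that matches the target, one obtains
\[
\hat M_\lambda(x,y) = \int_0^\infty\!\!\int_{-\pi}^{\pi} e^{i[\lambda\varphi(\theta) - 2\pi r\rho\sin(\theta+\theta_{x,y})]}\,\chi(\theta)\,\Phi(\rho)\,d\theta\,d\rho.
\]
On the support of $\chi$ one has $\varphi(\theta)=2\pi\theta$, so the phase reduces to $\Psi(\rho,\theta)=2\pi[\lambda\theta - r\rho\sin(\theta+\theta_{x,y})]$.

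Next I would locate and analyse the critical point of $\Psi$. The equations $\partial_\rho\Psi = -2\pi r\sin(\theta+\theta_{x,y})=0$ and $\partial_\theta\Psi = 2\pi[\lambda - r\rho\cos(\theta+\theta_{x,y})]=0$ have in the relevant range the unique solution $(\rho_0,\theta_0)=(\lambda/r,\,-\theta_{x,y})$. Under the hypotheses of the lemma, $\rho_0\in[2/3,4/3]$ and $|\theta_0|\le 1/10$, so the critical point lies strictly inside the region where both $\chi$ and $\Phi$ are bounded below by a positive constant. A short computation gives the Hessian at $(\rho_0,\theta_0)$ as $\bigl(\begin{smallmatrix} 0 & -2\pi r \\ -2\pi r & 0\end{smallmatrix}\bigr)$, with $|\det|^{1/2}=2\pi r$ and signature $0$, so the critical point is non-degenerate.

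The standard two-dimensional stationary phase formula then produces the leading term
\[
\frac{2\pi}{|\det H_\Psi|^{1/2}}\,e^{i\Psi(\rho_0,\theta_0)}\,\Phi(\rho_0)\,\chi(\theta_0) \;=\; \frac{1}{r}\,e^{-2\pi i\lambda\theta_{x,y}}\,\Phi(\lambda/r)\,\chi(-\theta_{x,y}),
\]
and multiplying by $r$ yields precisely the main term in \eqref{AsymptoticFormula}. The remainder in the stationary phase expansion is of order $\lambda^{-2}$ (since $r$ and $\lambda$ are comparable by hypothesis), so after multiplication by $r$ it becomes $O(\lambda^{-1})$ and is therefore smaller than $0.1$ for all sufficiently large $|\lambda|$.

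The main technical point is ensuring that the remainder estimate is uniform in the parameters $r$ and $\theta_{x,y}$. Setting $\tilde\Psi(\rho,\theta;s,\tau) = 2\pi[\theta - s\rho\sin(\theta+\tau)]$, one has $\Psi = \lambda\,\tilde\Psi(\rho,\theta;\,r/\lambda,\,\theta_{x,y})$; as $(s,\tau)$ ranges over the compact set $[3/4,3/2]\times[-1/10,1/10]$, this is a smooth family of phases with uniformly non-degenerate critical points and uniformly bounded derivatives of all orders. The standard stationary phase remainder estimate therefore applies with a constant depending only on $\chi$, $\Phi$ and this compact parameter set, giving the required uniform $O(\lambda^{-1})$ bound on $r\hat M_\lambda - e^{-2\pi i\lambda\theta_{x,y}}\Phi(\lambda/r)\chi(-\theta_{x,y})$.
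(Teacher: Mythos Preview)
Your argument is correct, and it takes a genuinely different route from the paper's. You treat the two-dimensional oscillatory integral directly via the stationary phase formula (with a smooth family of phases parametrized by $(r/\lambda,\theta_{x,y})$), locate the unique nondegenerate critical point, and read off the leading term and remainder. The paper instead proceeds by hand: it first integrates in $\rho$ exactly (producing $\hat\Phi(r\sin(\theta+\theta_{x,y}))$), then shifts and dilates in $\theta$ to arrive at the one-dimensional integral
\[
r\hat M_\lambda(x,y)=e^{-2\pi i\lambda\theta_{x,y}}\int_{\R}\hat\Phi\!\bigl(r\sin\tfrac{s}{r}\bigr)\,e^{2\pi i\tfrac{\lambda}{r}s}\,\chi\!\bigl(\tfrac{s}{r}-\theta_{x,y}\bigr)\,ds,
\]
and finishes by truncating the Schwartz tail of $\hat\Phi$, replacing $r\sin(s/r)$ by $s$, and appealing to Fourier inversion. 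Your approach is faster and more conceptual, but relies on a black-box stationary phase theorem with uniformity in parameters; the paper's approach is entirely elementary and self-contained, and its structure (integrate out one variable exactly, then expand carefully in the remaining ones) is what the paper scales up to handle the $d\ge 3$ case in the next section.
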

\begin{proof}
We write down the definition of the Fourier transform and perform the polar change of variables:
\mlt{
\hat{M}_\lambda(x,y) = \iint\limits_{\R^2}M_\lambda(\xi,\eta)e^{-2\pi i(x\xi + y\eta)}\,d\xi\,d\eta = \int\limits_{-\frac12}^\frac12\int\limits_{\frac12}^{\frac32}e^{i\lambda \varphi(\theta)}\chi(\theta)\Phi(\rho)e^{-2\pi i (x\rho \cos \theta + y \rho \sin \theta)}\,d\rho\,d\theta =\\
\int\limits_{-\frac12}^\frac12\hat{\Phi}(x \cos \theta + y \sin \theta) e^{i\lambda \varphi(\theta)}\chi(\theta)\,d\theta = 
\int\limits_\R\hat{\Phi}(r \sin(\theta + \theta_{x,y})) e^{i\lambda \varphi(\theta)}\chi(\theta)\,d\theta =\\
\int\limits_\R\hat{\Phi}(r \sin \theta) e^{i\lambda \varphi(\theta-\theta_{x,y})}\chi(\theta-\theta_{x,y})\,d\theta = \int\limits_\R\hat{\Phi}(r \sin\theta) e^{2\pi i\lambda(\theta-\theta_{x,y})}\chi(\theta-\theta_{x,y})\,d\theta,
}
by our requirements on the function~$\varphi$. We multiply this identity by~$r$ and make yet another change of variable:
\eq{\label{OneDimensionalDilation}
r\hat{M}_\lambda(x,y) = e^{-2\pi i \lambda \theta_{x,y}}\int\limits_{\R}\hat{\Phi}\Big(r \sin \frac{s}{r}\Big) e^{2\pi i\frac{\lambda}{r} s}\chi\Big(\frac{s}{r}-\theta_{x,y}\Big)\,ds.
}
We note that since~$\chi$ is supported in~$[-1/2,1/2]$ and~$|\theta_{x,y}|\leq 1/10$, the integrand is non-zero only when~$|s| \leq r$. In particular,~$|r \sin s/r| \geq s/10$. Let~$R$ be a fixed large number such that
\eq{
|\hat{\Phi}(x)| \leq |x|^{-2},\quad \hbox{when}\quad |x| > R, \qquad \hbox{and} \qquad \int\limits_{|x| > R}\frac{dx}{x^2} < 10^{-5}.
}
Then, since we assume~$\|\chi\|_{L_\infty} \leq 1$,
\eq{\label{TailEstimate}
\Big|\int\limits_{|s| > R}\hat{\Phi}\Big(r \sin \frac{s}{r}\Big) e^{2\pi i\frac{\lambda}{r} s}\chi\Big(\frac{s}{r}-\theta_{x,y}\Big)\,ds\Big| \leq \int\limits_{|s| > R}\frac{100}{s^2}\,ds < \frac{1}{100}.
}
Thus, we need to show that
\eq{
\int\limits_{|s| < R}\hat{\Phi}\Big(r \sin \frac{s}{r}\Big) e^{2\pi i\frac{\lambda}{r} s}\chi\Big(\frac{s}{r}-\theta_{x,y}\Big)\,ds
}
is close to~$\Phi(\lambda/r)\chi(-\theta_{x,y})$. For that, we wish to replace~$r \sin \frac{s}{r}$ by~$s$. Note that~$|s| < R$ and~$r$ is large. Therefore,
\eq{
\Big|\hat{\Phi}\Big(r \sin \frac{s}{r}\Big) - \hat{\Phi}(s)\Big| \leq \|\hat{\Phi}\|_{\Lip}\, r\frac{|s|^3}{r^3} = \|\hat{\Phi}\|_{\Lip} \frac{|s|^3}{r^2}.
}
What is more,
\eq{
r^{-2}\Big|\int\limits_{|s| < R} |s|^3\chi\Big(\frac{s}{r}-\theta_{x,y}\Big)\,ds\Big|=O(|\lambda|^{-2})\to 0,
}
when~$\lambda \to \infty$ and~$r\asymp \lambda$. Thus, it suffices to prove
\eq{
\int\limits_{|s| < R}\hat{\Phi}(s) e^{2\pi i\frac{\lambda}{r} s}\chi\Big(\frac{s}{r}-\theta_{x,y}\Big)\,ds
}
is close to~$\Phi(\lambda/r)\chi(-\theta_{x,y})$. Using the same estimate as in~\eqref{TailEstimate}, we see that this quantity is~$1/100$-close to
\eq{
\int\limits_{\R}\hat{\Phi}(s) e^{2\pi i\frac{\lambda}{r} s}\chi\Big(\frac{s}{r}-\theta_{x,y}\Big)\,ds,
}
which converges to the desired value.
\end{proof}
\begin{Rem}
The convergence is uniform with respect to~$\theta_{x,y}\in [-1/10,1/10]$ and~$r/\lambda\in [3/4,3/2]$.
\end{Rem}
\begin{Rem}
One may prove the asymptotic formula
\eq{
r\hat{M}_\lambda(x,y) \to e^{-2\pi i \lambda \theta_{x,y}}\Phi(\lambda/r)\chi(-\theta_{x,y})
}
by choosing~$R$ slowly tending to infinity as~$\lambda \to \infty$. 
\end{Rem}
\begin{Cor}\label{LpCor}
Let us denote the Fourier multiplier with the symbol~$M_\lambda$ by the same symbol. Then, there exists a tiny positive constant~$c$ such that
\eq{
\|M_\lambda\|_{L_p\to L_p} \geq c\lambda^{|\frac{2}{p}-1|},\quad p\in [1,\infty], 
}
provided~$\lambda$ is sufficiently large.
\end{Cor}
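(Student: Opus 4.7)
The plan is to reduce the lower bound on the operator norm to a lower bound on $\|\hat M_\lambda\|_{L_p}$ by testing against a single fixed Schwartz function, read off the latter directly from Lemma~\ref{AsympFormulad=2}, and treat the range $p>2$ by duality.

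First I fix a Schwartz function $f\in\mathcal{S}(\R^2)$, independent of $\lambda$, with $\hat f \equiv 1$ on a neighborhood of $\supp M_\lambda \subset \{1/2 \leq \rho \leq 3/2\}$. Then the multiplier operator with symbol $M_\lambda$ sends $f$ to $\check M_\lambda$, and since $\|f\|_{L_p}$ is a finite constant uniform in $p\in[1,\infty]$,
\[
\|M_\lambda\|_{L_p\to L_p} \gtrsim \|\check M_\lambda\|_{L_p} = \|\hat M_\lambda\|_{L_p}.
\]

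Next I apply Lemma~\ref{AsympFormulad=2} on the sector $\Omega_\lambda = \{(x,y) : r\in[3\lambda/4,3\lambda/2],\ \theta_{x,y}\in[-1/10,1/10]\}$. On a sub-sector where the product $\Phi(\lambda/r)\chi(-\theta_{x,y})$ comfortably exceeds $0.1$ (obtained either by shrinking the angular and radial window to a neighborhood of the maxima of $\Phi$ and $\chi$, or by choosing the bump $\Phi$ with a sufficiently large maximum, at the cost of a harmless multiplicative constant), the asymptotic formula~\eqref{AsymptoticFormula} gives $|\hat M_\lambda(x,y)| \gtrsim 1/r \asymp 1/\lambda$ on a set of planar area $\asymp \lambda^2$. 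Integration yields $\|\hat M_\lambda\|_{L_p} \gtrsim \lambda^{-1}\cdot \lambda^{2/p} = \lambda^{2/p-1}$, which settles the case $p\in[1,2]$.

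For $p\in(2,\infty]$ I invoke duality: the adjoint of the multiplier with symbol $M_\lambda$ is the multiplier with symbol $\overline{M_\lambda}$, which is of the same form as $M_\lambda$ with $\lambda$ replaced by $-\lambda$. The modulus $|\widehat{\overline{M_\lambda}}|$ equals $|\hat M_\lambda|$ after a reflection, so the preceding two steps applied at the dual exponent $p'\in[1,2)$ yield $\|M_\lambda\|_{L_p\to L_p} = \|\overline{M_\lambda}\|_{L_{p'}\to L_{p'}} \gtrsim |\lambda|^{2/p'-1} = |\lambda|^{1-2/p}$. The only delicate point in the whole argument is confirming that the $0.1$ slack in~\eqref{AsymptoticFormula} is strictly dominated by $\Phi\cdot\chi$ on a set of positive area — a routine normalisation rather than a genuine obstacle.
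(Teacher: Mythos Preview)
Your proof is correct and takes a slightly different route from the paper's. Both arguments reduce to $p\leq 2$ by duality and extract the pointwise lower bound $|\hat M_\lambda|\gtrsim\lambda^{-1}$ on a sector of area $\asymp\lambda^2$ from Lemma~\ref{AsympFormulad=2}. The difference is in the test function for $1<p\leq 2$: you pick a Schwartz $f$ with $\hat f\equiv 1$ on $\supp M_\lambda$, so that $M_\lambda f=\check M_\lambda$ exactly and the operator norm is bounded below by $\|\hat M_\lambda\|_{L_p}/\|f\|_{L_p}$ with no further work; the paper instead tests against the indicator of a ball of radius $1/100$ and must then check that the convolution $K_\lambda*f$ does not suffer cancellation on the sector $\mathcal R_\lambda$, which it does via a Lipschitz estimate on the phase $\theta\mapsto\lambda\theta_{u,z}$. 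Your shortcut is cleaner and exploits the compact frequency support of $M_\lambda$, whereas the paper's hands-on argument would also apply to symbols that are not compactly supported (a generality not needed here). Your remark about the $0.1$ slack versus $\Phi\cdot\chi$ is exactly the same implicit normalisation the paper makes when it invokes ``our assumptions about the functions $\Phi$ and $\chi$''.
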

\begin{proof}
By duality, it suffices to consider the case~$p \leq 2$. Let us start with the endpoint~$p=1$: we wish to show that~$\|K_\lambda\|_{L_1} \geq c\lambda$, where~$K_\lambda$ is short for~$\hat{M}_{\lambda}$. We assume~$\lambda$ is so large that~\eqref{AsymptoticFormula} holds true. Then, by our assumptions about the functions~$\Phi$ and~$\chi$, 
\alg{
&\Big|\hat{M}_\lambda(x,y)\Big| \gtrsim \lambda^{-1},\quad (x,y)\in \mathcal{R}_\lambda,\qquad \text{where}\\
&\mathcal{R}_\lambda = \Set{(x,y)\in \R^2}{\sqrt{x^2+y^2} \in [4\lambda/5,5\lambda/4]\ \hbox{and}\ |x|/|y| < \frac{1}{100}}.
}
The notation~$A\gtrsim B$ means~$A\geq C B$ for a uniform constant~$C$ (with respect to~$\lambda$). The area of~$\mathcal{R}_\lambda$ is bounded away from zero by~$c\lambda^2$, which leads to the desired estimate.

The estimate for the~$L_p\to L_p$ norm is a little bit trickier. Let~$f$ be the characteristic function of a small ball centered at the origin. We will show that 
\eq{\label{ConvEst}
\Big|K_\lambda*f(x,y)\Big| \gtrsim \lambda^{-1},\qquad \hbox{when}\ (x,y)\in\mathcal{R}_\lambda,%\sqrt{x^2+y^2} \in [2/3\lambda,4/3\lambda]\ \hbox{and}\ |y|/|x| < \frac{1}{10},
}
the parameter~$\lambda$ is sufficiently large, and the radius of the ball is sufficiently small ($1/100$ suffices). We use~\eqref{AsymptoticFormula}:% and the fact that~$(x,y)\mapsto \lambda \theta_{x,y}$ is a (uniformly w.r.t.~$\lambda$) Lipschitz function on~$\mathcal{R}$:
\mlt{
\Big|K_\lambda*f(x,y)\Big| = \Big|\!\!\!\int\limits_{(u,z)\in B_{0.01}(x,y)}\!\!\!\!\!\! K_\lambda(u,z)\,du\,dz\Big|\geq\\ \Big|\!\!\!\int\limits_{(u,z)\in B_{0.01}(x,y)}\!\!\!\!\! e^{-2\pi i \lambda \theta}r^{-1}\Phi(\lambda/r)\chi(-\theta)\,du\,dz\Big| - \frac{1}{1000\lambda},
}
here~$r = \sqrt{u^2 + z^2}$ and~$\theta = \theta_{u,z}$. The absolute value of the latter integral may be rewritten as
\eq{
\Big|\!\!\!\int\limits_{(u,z)\in B_{0.01}(x,y)}\!\!\!\!\! e^{-2\pi i \lambda (\theta - \theta_{x,y})}r^{-1}\Phi(\lambda/r)\chi(-\theta)\,du\,dz\Big|.
}
Since~$|\lambda(\theta-\theta_{x,y})| < 1/10$ on the domain of integration (since the function~$(u,z)\mapsto \lambda \theta_{u,z}$ is~$10$-Lipschitz there), the absolute value of the integral is comparable to~$(x^2 + y^2)^{-1/2}\Phi(\lambda/\sqrt{x^2 + y^2})\chi(-\theta_{x,y})$ and~\eqref{ConvEst} is proved.

By~\eqref{ConvEst}, we have
\eq{
\|K_\lambda*f\|_{L_p} \gtrsim \lambda^{-1} |\mathcal R_\lambda|^{1/p} = \lambda^{2/p-1},
}
which finishes the proof.
\end{proof}
\begin{proof}[Proof of Proposition~\ref{Example}]
The function~$(\xi,\eta)\mapsto \chi(\theta)\Phi(\rho)/\rho$ is smooth and compactly supported, denote the Fourier multiplier with this symbol by~$T$. Then,~$M_\lambda = TA_{\lambda}$ and~$\|M_{\lambda}\| \leq \|T\|\|A_{\lambda}\|\lesssim \|A_\lambda\|$, where the norms are the~$L_p\to L_p$ norms of the operators.
\end{proof}

\section{Example for~$d\geq 3$}\label{S2,5}
Consider the function
\eq{
M_\lambda(\xi) = e^{2\pi i \lambda \varphi(\xi/|\xi|)}\tilde{\Phi}(|\xi|)\tilde{\chi}(\xi/|\xi|),\qquad \xi \in\R^d.
}
The function~$\tilde{\Phi}\colon \R\to \R$ is a smooth non-negative function supported inside~$[1/2,3/2]$. The functions~$\varphi$ and~$\tilde{\chi}$ are defined on the unit sphere. We suppose~$\tilde{\chi}$ is smooth, non-negative, and supported in a tiny neighborhood~$U_\eps$ of~$(1,0,\ldots,0)$. As for the function~$\varphi$, we provide an explicit formula:
\eq{
\varphi(\zeta) = |\zeta_1 - 1|^2 + \sum\limits_{j=2}^d\zeta_j^2 = 2-2\zeta_1,\qquad \zeta \in S^{d-1}.
}
The main feature we will use is that this linear function possesses certain 'curvature' when restricted to the unit sphere. In particular, if we restrict the function~$\varphi$ to the intersection of~$S^{d-1}$ with a linear hyperplane passing through~$U_\eps$,~$\varphi$ attains the minimal value only at the point that has maximal first coordinate on this intersection. 

We wish to compute the value~$\hat{M}_\lambda(x)$.  More specifically, we will prove the following analog of Lemma~\ref{AsympFormulad=2}. It yields the case~$d\geq 3$ in Proposition~\ref{Example} in the same way as Lemma~\ref{AsympFormulad=2} yields Corollary~\ref{LpCor} and the case~$d=2$ of Proposition~\ref{Example}.
\begin{St}
There exists a parallelepiped~$R_\lambda$ with dimensions~$\gtrsim\lambda$ and two functions~$L_\lambda \colon R_\lambda \to \R$ and~$A_\lambda \colon R_\lambda \to \R_+$ such that
\eq{
\Big|\hat{M}_\lambda(x) - e^{2\pi i L_\lambda(x)}A_{\lambda}(x)\Big| \lesssim \lambda^{-\frac{d}{2}-1},
}
the function~$L_\lambda$ is~$O(1)$-Lipschitz, and~$A_\lambda(x) \gtrsim \lambda^{-d/2}$ when~$x\in R_\lambda$, provided~$\lambda$ is sufficiently large.
\end{St}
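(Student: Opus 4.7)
The plan is to compute $\hat M_\lambda(x)$ by the $d$-dimensional method of stationary phase, which plays here the role that the one-dimensional phase manipulation played in the $d=2$ case. I pass to spherical coordinates $\xi=\rho\zeta$, $\zeta\in S^{d-1}$, and parametrize a neighborhood of $e_1=(1,0,\ldots,0)$ containing $\supp\tilde\chi$ by $u\in\R^{d-1}$ via $\zeta=(\sqrt{1-|u|^2},u)$. Since $\varphi(\zeta)=2-2\sqrt{1-|u|^2}$ in these coordinates, writing $x=(x_1,x')$ reduces the Fourier integral to
\[
\hat M_\lambda(x)=\iint_{\R\times\R^{d-1}}e^{2\pi i\Psi(\rho,u;x)}\,a(\rho,u)\,d\rho\,du,
\]
with phase $\Psi(\rho,u;x)=2\lambda-(2\lambda+\rho x_1)\sqrt{1-|u|^2}-\rho\,x'\cdot u$ and smooth, compactly supported amplitude $a(\rho,u)=\tilde\Phi(\rho)\tilde\chi(\zeta(u))\rho^{d-1}(1-|u|^2)^{-1/2}$.

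Solving $\nabla_{(\rho,u)}\Psi=0$ explicitly: for $x_1<0$ and $x'\neq 0$, a direct computation yields the unique critical point
\[
\rho^*(x)=-\frac{2\lambda\,x_1}{|x|^2},\qquad u^*(x)=-\frac{x_1\,x'}{|x|\,|x'|},\qquad \Psi_*(x)=2\lambda\Bigl(1-\frac{|x'|}{|x|}\Bigr),
\]
and a Schur-complement computation of $D^2\Psi|_*$ using the natural $(\rho,u)$-block decomposition gives
\[
\bigl|\det D^2\Psi(\rho^*,u^*;x)\bigr|=(2\lambda)^{d-2}\,|x'|^{d-4}\,|x|^{6-d},
\]
which is $\asymp\lambda^d$ whenever $|x|\asymp|x'|\asymp\lambda$; the Hessian is non-degenerate with signature $\sigma=d-2$ (constant by continuity, since one block is positive definite and a single $\rho$-eigenvalue turns negative). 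The smooth map $x\mapsto(\rho^*(x),u^*(x))$ is a local diffeomorphism of Jacobian magnitude $\asymp\lambda^d$ on $\{x_1<0,\,x'\neq 0\}$, so the preimage of a fixed open box $\{(\rho_0,u_0):\rho_0\in(2/3,4/3),\ u_0\in U_{\eps/2}\setminus\{0\}\}$ is an open subset of $\R^d$ in which one can inscribe a parallelepiped $R_\lambda$ with every side $\gtrsim\lambda$.

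The standard one-term stationary phase asymptotic applied to the smooth compactly supported $a$ and the non-degenerate $\lambda$-scaled phase $\Psi$ yields, uniformly for $x\in R_\lambda$,
\[
\hat M_\lambda(x)=\frac{e^{i\pi\sigma/4}\,a(\rho^*,u^*)}{\bigl|\det D^2\Psi(\rho^*,u^*;x)\bigr|^{1/2}}\,e^{2\pi i\Psi_*(x)}+O\!\bigl(\lambda^{-d/2-1}\bigr).
\]
Setting $L_\lambda(x)=\Psi_*(x)+\sigma/8$ and letting $A_\lambda(x)$ denote the modulus of the leading coefficient (automatically positive by the choice of nonnegative $\tilde\Phi,\tilde\chi$) gives the stated asymptotic. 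The lower bound $A_\lambda(x)\gtrsim\lambda^{-d/2}$ follows from the Hessian determinant formula; the $O(1)$-Lipschitz bound on $L_\lambda$ comes from direct differentiation of the function $|x'|/|x|$, whose gradient is of size $O(\lambda^{-1})$ when $|x|\asymp\lambda$.

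The main technical obstacle is the uniformity of the $O(\lambda^{-d/2-1})$ remainder as $x$ (and hence the critical point) varies over the $\lambda$-scale region $R_\lambda$. This is handled by the rescaling $y=x/\lambda$, after which $\Psi=\lambda\tilde\Psi(\rho,u;y)$ for a smooth bounded function $\tilde\Psi$ whose critical point depends smoothly on $y$ and whose Hessian eigenvalues stay uniformly bounded away from zero on the compact parameter range $R_\lambda/\lambda$; the classical Morse-lemma-plus-Gaussian-asymptotics argument then yields a one-term expansion with remainder $\lesssim\lambda^{-d/2-1}$ uniformly in $y$.
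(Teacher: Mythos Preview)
Your argument is correct and takes a genuinely different route from the paper. You apply the full $d$-dimensional stationary phase lemma directly to the $(\rho,u)$-integral, after the key observation that the substitution $y=x/\lambda$ turns the phase into $\lambda\tilde\Psi(\rho,u;y)$ with $\tilde\Psi$ depending smoothly on a bounded parameter $y$; this makes the one-term expansion with uniform $O(\lambda^{-d/2-1})$ remainder a direct consequence of the classical Morse-lemma argument with parameters. Your explicit computations of the critical point, the critical value $\Psi_*(x)=2\lambda(1-|x'|/|x|)$, the Hessian determinant $(2\lambda)^{d-2}|x'|^{d-4}|x|^{6-d}$, and the signature $d-2$ all check out.

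The paper instead separates variables: it first integrates out the radial variable $\rho$, obtaining $\hat\Phi(r\sin\theta_2)$, then expands carefully in the single angular variable $\theta_2$ (dilating by $r$, cutting Schwartz tails, and Taylor-expanding the amplitude and the higher-order phase terms), and finally applies only the $(d-2)$-dimensional stationary phase formula in the remaining angular variables $\theta_{[2]}$. This is more hands-on and avoids invoking uniformity of stationary phase in a parameter, at the cost of considerably more bookkeeping. Your approach is shorter and more conceptual, while the paper's approach is more self-contained.

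One cosmetic slip: the Jacobian of $x\mapsto(\rho^*(x),u^*(x))$ has magnitude $\asymp\lambda^{-d}$, not $\lambda^{d}$ (equivalently, the inverse map has Jacobian $\asymp\lambda^{d}$); the conclusion that $R_\lambda$ has sides $\gtrsim\lambda$ is unaffected.
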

\begin{proof}
Let~$r = |x|$ and assume~$x/|x|$ is close to the vector~$(0,1,0,0,\ldots,0)$ (the specific vector is chosen for notational convenience, the only important thing is that it is orthogonal to~$(1,0,0\ldots,0)$). Later we will see that~$x/|x|$ should be close, but not equal to~$(0,1,0,\ldots,0)$. We will need to use another spherical coordinate system, we will call it 'new' (its choice depends on~$x$), and the one we have worked in before is called 'stationary' (the stationary system is simply the usual Euclidean coordinate system, not a spherical one). The coordinates in the new system will be~$(\rho,\tilde{\theta})$, where~$\rho \in \R_+$ and~$\tilde{\theta} \in S^{d-1}$. In the new coordinate system,~$x$ has coordinates~$(r,0,1,0,\ldots,0)$. Let~$z_x$ be the point on the intersection of the linear hyperplane~$x^\perp$ with the unit sphere at which~$\varphi$ attains its minimum (on this intersection). One may see~$z_x$ is close to~$(1,0,0,\ldots,0)$ (in the stationary system). We choose the new system in such a way that~$z_x$ has coordinates~$(1,1,0,0,\ldots,0)$. This information completely defines the new coordinate system in the case~$d=3$; in higher dimensions we make some choice of possible new coordinate systems. We go further and parametrize~$\tilde{\theta}$ with the points on the tangent plane at the point~$z_x$:
\eq{
\tilde{\theta} = \Big(\cos\theta_2\cos\theta_3\ldots \cos \theta_d,\, \sin\theta_2,\, \cos\theta_2\sin\theta_3,\,\ldots,\, \cos\theta_2\cos\theta_3\ldots \sin \theta_d\Big).
}
Here~$\theta = (\theta_2,\theta_3,\ldots,\theta_d)\in \R^{d-1}$ lies inside a small ball~$B_\eps(0)$. We have
\eq{
\hat{M}_\lambda(x) = \int\limits_{B_\eps(0)}\int\limits_{\frac12}^{\frac32}e^{2\pi i (-r\rho \sin\theta_2 + \lambda\varphi_x(\theta))}\tilde{\Phi}(\rho)\tilde{\chi}(\tilde{\theta}) J(\rho,\theta)\,d\rho\,d\theta.
}
The function~$J(\rho,\theta) = \rho^{d-1}\tilde{J}(\theta)$ comes from the  spherical change of variables. Note that~$\tilde{J}(\theta)\ne 0$ on~$B_\eps(0)$. Let us specify the function~$\varphi_x\colon B_\eps \to \R$. Let~$(1,0,0,\ldots,0)$ in the stationary coordinates correspond  to~$(1,\alpha)$,~$\alpha \in S^{d-1}$, in the new coordinates. Then, with the notation~$\scalprod{\cdot}{\cdot}$ for the Euclidean scalar product,
\mlt{\label{Explicitvalues}
\varphi_x(\theta) = 2 - 2\scalprod{\tilde{\theta}}{\alpha} =\\ 2-2\alpha_1\cos\theta_2\cos\theta_3\ldots \cos \theta_d -2\alpha_2 \sin\theta_2 - 2\alpha_3 \cos\theta_2\sin\theta_3 - \ldots - 2\alpha_d\cos\theta_2\cos\theta_3\ldots \sin \theta_d.
}
We have chosen our coordinate system in such a fashion that~$\varphi_x$, when restricted to the set
\eq{
x^\perp\cap S^{d-1} = \set{\tilde{\theta}}{\tilde{\theta}_2 = 0}
} 
(which is the same as~$\theta_2 = 0$), attains its minimum at the origin. Thus,~$\alpha_3=\alpha_4=\ldots=\alpha_d = 0$. We may compute~$\alpha_2 = \scalprod{x/|x|}{(1,0,0\ldots,0)} = x_1/|x|$. Recall we also have~$\alpha_1^2 + \alpha_2^2 = 1$, so this equation completely defines~$\alpha$. Thus,
\eq{
\varphi_x(\theta) = 2 - 2\alpha_1\cos\theta_2\cos\theta_3\ldots \cos \theta_d - 2\alpha_2 \sin\theta_2,\qquad \alpha_2 = \frac{x_1}{|x|}, \alpha_1 = \frac{\sqrt{|x|^2 - x_1^2}}{|x|}.
}
Later we will require~$\alpha_2$ to be non-zero. 

Let us write~$\psi = \varphi_x$,~$\Phi(\rho) = \rho^{d-1}\tilde{\Phi}(\rho)$, and~$\chi(\theta) = \tilde{\chi}(\tilde{\theta})\tilde{J}(\theta)$ for brevity. Note that the function~$\chi$ depends on~$x/|x|$ (we will only use that since~$\chi$ is non-zero on a small neighborhood of~$(1,0,0,\ldots,0)$ in the stationary system,~$\tilde{\chi}(0)\ne 0$, provided~$x/|x|$ is sufficiently close to~$(0,1,0,\ldots,0)$). We wish to compute
\eq{\label{eq33}
\int\limits_{B_\eps(0)}\int\limits_{\frac12}^{\frac32}e^{2\pi i (-r\rho \sin\theta_2 + \lambda\psi(\theta))}\Phi(\rho)\chi(\theta)\,d\rho\,d\theta = \int\limits_{B_\eps(0)}e^{2\pi i \lambda\psi(\theta)} \hat{\Phi}(r\sin\theta_2)\chi(\theta)\,d\theta.
}
Let us call the latter integral simply~$\I$. We will be using the notation~$\theta = (\theta_2,\theta_{[2]})$, where~$\theta_{[2]} = (\theta_3,\theta_4,\ldots,\theta_d)\in \R^{d-2}$. We pick a large natural number~$N$ and write Taylor's expansion
\eq{
\psi(\theta) = \psi_0(\theta_{[2]}) + \theta_2\psi_1(\theta_{[2]}) + \sum\limits_{j=2}^{N}\theta_2^j\psi_j(\theta_{[2]}) + R_N^\psi(\theta),
}
where~$R_N^\psi(\theta) = O(|\theta_2|^{N+1})$ as~$\theta_2 \to 0$, the functions~$\psi_j$ are smooth, and~$\theta \in B_\eps(0)$. In particular,
\eq{
\psi_0(\theta_{[2]}) = \psi(0,\theta_{[2]})\quad \text{and}\quad \psi_1(\theta_{[2]}) = \frac{\partial \psi}{\partial \theta_2}(0,\theta_{[2]}),
}
which, in view of~\eqref{eq33}, turns into
\alg{\label{SpecificValues}
&\psi_0(\theta_{[2]}) = -2\alpha_1\cos\theta_3\cos\theta_4\ldots\cos\theta_d;\\
&\psi_1(\theta_{[2]}) = -2\alpha_2,
}
here~$\theta_{[2]}\in B_\delta(0)$, where~$\delta$ is a small number.
We perform dilation with respect to~$\theta_2$ (similar to~\eqref{OneDimensionalDilation}):
\eq{
r\I = \int\limits_{\R^{d-1}}\exp\bigg(2\pi i \lambda\Big(\sum\limits_{j=0}^{N}(\theta_2/r)^j\psi_j(\theta_{[2]}) + R_N^\psi(\theta_2/r,\theta_{[2]})\Big)\bigg) \hat{\Phi}\big(r\sin (\theta_2/r)\big)\chi\big(\theta_2/r,\theta_{[2]}\big)\,d\theta.
}
Note that since~$\hat{\Phi}$ is a Schwartz function, we may 'cut the Schwartz tail' similar to~\eqref{TailEstimate}:
\mlt{\label{35}
r\I =\\ \int\limits_{|\theta_2|\leq r^{\kappa}}\exp\bigg(2\pi i \lambda\Big(\sum\limits_{j=0}^{N-1}(\theta_2/r)^j\psi_j(\theta_{[2]}) + R_N^\psi(\theta_2/r,\theta_{[2]})\Big)\bigg) \hat{\Phi}\big(r\sin (\theta_2/r)\big)\chi\big(\theta_2/r,\theta_{[2]}\big)\,d\theta + O(r^{-d}),
}
here~$\kappa$ is a fixed small positive number to be specified later. Let us assume~$N$ is odd. We write the expansion
\eq{
\hat{\Phi}\Big(r\sin \frac{\theta_2}{r}\Big) = \hat{\Phi}\Big(\theta_2 + \sum\limits_{\genfrac{}{}{0pt}{-2}{j\geq 2}{2 \mid j}}^{N-1} \frac{\theta_2^{j+1}}{(j+1)!r^j} + rR(\theta_2/r)\big)\Big), 
}
where~$R(t) = O(|t|^{N+1})$ when~$t\to 0$ and~$R$ is a smooth function. We require~$\kappa (N+1)\leq 1/10$ (we choose~$\kappa$ sufficiently small), and, thus, may assume~$rR(\theta_2/r) = O(r^{-N+\frac12})$ uniformly on the domain~$|\theta_2|\leq r^{\kappa}$. Expanding further (with Taylor's formula for the function~$\hat{\Phi}$), we arrive at 
\eq{\label{expansionPhi}
\hat{\Phi}\Big(r\sin \frac{\theta_2}{r}\Big) = \hat{\Phi}(\theta_2) + \sum\limits_{j=1}^{N-1}r^{-j}\Psi_j(\theta_2) + R_N^\Phi(\theta_2,r).
}
Here~$R_N^\Psi(\theta_2,r) = O(r^{-N+\frac12})$ uniformly when~$|\theta_2|\leq r^\kappa$, and, moreover, the~$\Psi_j$ are Schwartz functions. What is more,
\eq{\label{expansionChi}
\chi(\theta_2/r,\theta_{[2]}) = \chi(0,\theta_{[2]}) + \sum\limits_{j=1}^{N-1}\theta_2^j r^{-j}\chi_{j}(\theta_{[2]}) + R_N^{\chi}(\theta),
} 
where~$R_N^{\chi}(\theta) = O((\theta_2/r)^{N})$; in particular
\eq{
R_N^{\chi}(\theta) = O\big(r^{-N+\frac12}\big) \qquad \text{when}\qquad |\theta_2| \leq r^\kappa.
} 
We may write yet another asymptotic formula
\mlt{\label{expansionExp}
\exp\Big(2\pi i \lambda\Big(\sum\limits_{j=2}^{N}\theta_2^j r^{-j}\psi_j(\theta_{[2]}) + R_N^\psi(\theta_2/r,\theta_{[2]})\Big)\Big) - 1=\\
\sum\limits_{k=1}^{N-1} \frac{(2\pi i \lambda)^k}{k!}\Big(\sum\limits_{j=2}^{N}\theta_2^j r^{-j}\psi_j(\theta_{[2]}) + R_N^\psi(\theta_2/r,\theta_{[2]})\Big)^k + O\Big(\Big(\frac{|\lambda||\theta_2|^2}{r^2}\Big)^N\Big) = \\ \sum\limits_{k=1}^{N-1}\sum\limits_{j= 2k}^{N} \lambda^kr^{-j}\tilde{\psi}_{k,j}(\theta) + O(r^{-N+\frac12}),
}
since we assume~$\lambda \asymp r$,~$|\theta_2|\leq r^\kappa$ and~$\kappa(N+1)\leq 1/10$. The functions~$\tilde{\psi}_{k,j}(\theta)$ are of the form~$p(\theta_2)q(\theta_{[2]})$, where both factors are smooth.
Plugging~\eqref{expansionPhi},~\eqref{expansionChi}, and~\eqref{expansionExp} into~\eqref{35}, we obtain
\mlt{\label{FinalExpansion}
r\I = \int\limits_{|\theta_2|\leq r^{\kappa}}e^{2\pi i (\lambda\psi_0(\theta_{[2]})+ \frac{\lambda}{r}\theta_2\psi_1(\theta_{[2]}))}\hat{\Phi}(\theta_2)\chi(0,\theta_{[2]})\,d\theta +\\ \sum\limits_{k=1}^{N-1}\sum\limits_{j=2k}^{N}\int\limits_{|\theta_2|\leq r^{\kappa}}\lambda^k r^{-j} e^{2\pi i (\lambda\psi_0(\theta_{[2]})+ \frac{\lambda}{r}\theta_2\psi_1(\theta_{[2]}))}\Theta_{k,j}(\theta)\,d\theta + O(r^{-N+\frac12+\kappa}),
}
here~$\Theta_{k,j}$ are some smooth functions of the form
\eq{
\Theta_{k,j} =\sum\limits_{\alpha} a_{\alpha}(\theta_2)b_{\alpha}(\theta_{[2]}),
}
where the~$a_{\alpha}$ are Schwartz functions of a single variable, the~$b_{\alpha}$ are smooth and compactly supported, and the index~$\alpha$ runs through a finite set. Recall~$\lambda/r\asymp 1$  and let~$\lambda,r\to \infty$. We claim that the terms with the indices~$k,j$ in the expansion~\eqref{FinalExpansion} behave as~$\lambda^{-\frac{d-2}{2} - j+k}$. From now on we set~$N = 2d$,~$\kappa = 1/(30d)$ and forget about the remainder term. Let us compute the asymptotically sharp value for the leading term of~\eqref{FinalExpansion} and then say that the computation for lower order terms is similar. Here the computation is:
\mlt{
\int\limits_{|\theta_2|\leq r^{\kappa}}e^{2\pi i \lambda(\psi_0(\theta_{[2]})+ \frac{\lambda}{r}\theta_2\psi_1(\theta_{[2]}))}\hat{\Phi}(\theta_2)\chi(0,\theta_{[2]})\,d\theta =\\ 
\int\limits_{\R^{d-1}}e^{2\pi i (\lambda\psi_0(\theta_{[2]})+ \frac{\lambda}{r}\theta_2\psi_1(\theta_{[2]}))}\hat{\Phi}(\theta_2)\chi(0,\theta_{[2]})\,d\theta + O(r^{-d}) = \\
\int\limits_{\R^{d-2}}e^{2\pi i \lambda\psi_0(\theta_{[2]})}\Phi\Big(\frac{\lambda}{r}\psi_1(\theta_{[2]})\Big)\chi(0,\theta_{[2]})\,d\theta_{[2]} + O(r^{-d}).
}
Recall that we have adjusted our new coordinates in the way that~$\psi_0$ attains its minimal value at the origin only. One may see~$\psi_0$ is smooth and, thus, falls under the scope of the standard stationary phase method (see, e.g., p. 344 in~\cite{Stein1993}):
\mlt{\label{StatPhase}
\int\limits_{\R^{d-2}}e^{2\pi i \lambda\psi_0(\theta_{[2]})}\Phi\Big(\frac{\lambda}{r}\psi_1(\theta_{[2]})\Big)\chi(0,\theta_{[2]})\,d\theta_{[2]} =\\ \frac{c_0}{\sqrt{H}} \lambda^{-\frac{d-2}{2}}e^{2\pi i \lambda \psi_0(0)}\Phi\Big(\frac{\lambda}{r}\psi_1(0)\Big)\chi(0) + O(\lambda^{-\frac{d}{2}}),\qquad \lambda \to \infty,
}
where~$c_0$ is an absolute constant and~$H$ is the determinant of the Hesse matrix~$\frac{\partial^2 \psi_0}{\partial \theta_{[2]}^2}(0)$ (one may observe from~\eqref{SpecificValues} that this value does not vanish). 

Pick a tiny number~$\nu > 0$ and consider a small spherical cap close to the vector~$(0,1,0\ldots,0)$ (in the stationary system) on which~$x_1\in (-1.1\nu|x|,-0.9\nu|x|)$. In particular, this cap does not contain the vector~$(0,1,0\ldots,0)$ itself. The number~$\nu$ is chosen in such a way that~$\tilde{\chi}(z_x) > 0$ when~$x$ belongs to the said spherical cap. Then, we have
\eq{
\hat{M}_\lambda(x) = r^{-1}\lambda^{-\frac{d-2}{d}} e^{2\pi i \lambda\frac{\sqrt{|x|^2 - x_1^2}}{|x|}}\Phi\Big(-\frac{2\lambda x_1}{r|x_1|}\Big) c_0(x) + O(\lambda^{-\frac{d+2}{2}}),
} 
where~$c_0(x)$ is a smooth non-zero function (it equals a constant times a positive function). Indeed, the leading term in~\eqref{FinalExpansion} is evaluated with the help of~\eqref{StatPhase}, whereas the lower order terms do not contribute stronger than~$O(\lambda^{-\frac{d+2}{2}})$ by a similar formula.

We adjust~$r$ in such a way that~$\Phi\Big(-\frac{2\lambda x_1}{r|x_1|}\Big)$ is non-zero (in other words,~$r\in [c_1\lambda,c_2\lambda]$, where~$c_1$ and~$c_2$ are absolute constants). Recall that~$x/|x|$ belongs to a fixed spherical cap. This means  there exists a parallelepiped~$R_\lambda$ of dimensions~$\gtrsim \lambda$ such that
\eq{
\hat{M}_{\lambda}(x)=e^{2\pi i \lambda L_\lambda(x)}A_\lambda(x),\quad x\in R_\lambda,
}
where~$L_\lambda$ is a~$100$-Lipschitz function and~$A_\lambda \gtrsim \lambda^{-d/2}$.
\end{proof} 

\section{Estimate}\label{S3}
The proof of Theorem~\ref{Inequality} relies upon the following sharp version of the Mikhlin--Hormander multiplier theorem. In this theorem,~$\Phi$ is an auxiliary smooth radial non-zero function that is compactly supported in~$\R^d\setminus \{0\}$.
\begin{Th}[Theorem $1$ in~\cite{Seeger1988}]
Let~$m$ be a function in~$\R^d$, let~$M$ be the Fourier mulitplier with the symbol~$m$. Then,
\eq{
\|M f\|_{L_{1,r}}\lesssim \Big(\sup\limits_{t > 0}\|\Phi(\cdot) m(t\fdot)\|_{B_2^{d/2,1}}\Big)\|f\|_{\mathcal{H}_1},
} 
provided~$r > 2$.
\end{Th}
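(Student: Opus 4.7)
My approach follows the atomic decomposition strategy for Hardy-space multiplier theorems, combined with Seeger's refined kernel analysis tailored to the endpoint Besov regularity~$B_2^{d/2,1}$.

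First I would reduce the problem to a single atom. Using the atomic decomposition of~$\mathcal{H}_1$, write~$f = \sum_j \lambda_j a_j$ with~$\sum |\lambda_j| \lesssim \|f\|_{\mathcal{H}_1}$, where each~$a_j$ is a~$(1,\infty)$-atom: supported in a ball~$B_j$, bounded in~$L^\infty$ by~$|B_j|^{-1}$, and of vanishing mean. By the quasi-subadditivity of~$\|\cdot\|_{L_{1,r}}$, and because the quantity~$\norm m\norm := \sup_{t>0}\|\Phi(\cdot)m(t\cdot)\|_{B_2^{d/2,1}}$ together with the multiplier norm is invariant under the translations and dilations that convert~$a_j$ to a canonical unit atom, it suffices to prove~$\|Ma\|_{L_{1,r}} \lesssim \norm m \norm$ uniformly for atoms~$a$ supported in~$B(0,1)$ with~$\|a\|_{L^\infty}\leq 1$ and~$\int a = 0$.

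Next, I would decompose the symbol dyadically, writing~$m = \sum_k m_k$ with~$m_k := \Phi(\cdot/2^k) m$, and analyse the corresponding operators~$T_k$ with kernels~$K_k$. Plancherel combined with the Besov hypothesis yields~$L^2$-control of each~$m_k$ at scale~$2^k$; crucially, the $\ell^1$-integrability in the third index of~$B_2^{d/2,1}$ makes the sequence of these scale-normalised $L^2$-norms summable in~$k$. For the local region I would use Cauchy--Schwarz to convert the summed~$L^2$-bound into an~$L^1$-bound on a fixed ball. For the far region I would exploit the cancellation~$\int a = 0$ by rewriting~$T_k a(x) = \int [K_k(x-y)-K_k(x)] a(y)\,dy$ and estimating~$K_k$ and its derivatives using the fractional Sobolev control of~$m_k$, obtaining a summable far-field~$L^1$ estimate.

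The main obstacle is the passage from these~$L^2$ and local-$L^1$ bounds to the target space~$L_{1,r}$. A direct Chebyshev argument from~$L^2$ produces only weak-type~$(1,1)$ information and loses a logarithm upon summation over~$k$; the third index of~$B_2^{d/2,1}$ is calibrated precisely to annihilate this logarithm, yielding the weak-$(1,1)$ endpoint~$L_{1,\infty}$. Upgrading this to the Lorentz space~$L_{1,r}$ with~$r > 2$ is Seeger's substantive contribution and would be the technical heart of the proof: it requires a secondary decomposition of each~$K_k$ into physical-space pieces of sidelength~$\sim 2^{-k}$, a~$TT^*$-style almost-orthogonality argument quantifying the overlap of these pieces across both frequency and spatial scales, and finally a distribution function estimate interpolated against the trivial local~$L^\infty$-bound. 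The gap between~$L^1$ and~$L^2$ on the Sobolev interpolation scale then dictates the restriction~$r > 2$, and engineering this refined distributional bound rather than the crude Chebyshev version is where the effort of Seeger's original argument is concentrated.
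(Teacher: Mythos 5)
First, a point of orientation: the paper does not prove this statement at all --- it is quoted as Theorem~1 of Seeger's 1988 paper and used as a black box, so there is no internal proof to compare against. Judged on its own terms, your proposal is an outline of the right general architecture (atomic decomposition of~$\mathcal{H}_1$, dyadic decomposition of the symbol, Cauchy--Schwarz locally, cancellation of the atom in the far field, the observation that the third index~$1$ in~$B_2^{d/2,1}$ is what kills the logarithm that a crude Chebyshev argument would produce), but it is not a proof: you explicitly defer the decisive step --- the upgrade from weak type~$(1,1)$ to~$L_{1,r}$ with~$r>2$ --- by describing in general terms the tools Seeger uses (secondary spatial decomposition, almost-orthogonality, a refined distribution function estimate) without carrying any of them out. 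That step is the entire content of the theorem; without it you have at best the classical H\"ormander-type weak~$(1,1)$ bound, which is not what is being claimed.

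There is also a genuine gap in your very first reduction. The space~$L_{1,r}$ with~$r>1$ is only quasi-normed and is \emph{not} normable, so from~$f=\sum_j\lambda_j a_j$ and a uniform bound~$\|Ma_j\|_{L_{1,r}}\lesssim 1$ you cannot conclude~$\|Mf\|_{L_{1,r}}\lesssim\sum_j|\lambda_j|$: the triangle inequality fails for infinite sums in~$L_{1,r}$, and it is known (Bownik's example) that uniform boundedness on atoms does not imply boundedness on~$\mathcal{H}_1$ when the target is merely quasi-Banach. Seeger's argument has to handle the superposition of atoms globally (via a distribution-function estimate for the full sum, not atom by atom), and any correct write-up must address this point rather than invoke ``quasi-subadditivity.'' So the proposal identifies the landscape correctly but leaves both the hardest analytic step and a nontrivial functional-analytic step unjustified.
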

The spaces~$\mathcal{H}_1$,~$L_{1,r}$, and~$B_2^{d/2,1}$ are the real Hardy class, the Lorentz, and the Besov spaces. The interpolation formulas (see Theorem~$5.3.1$ in~\cite{BerghLofstrom1976} and~\cite{FRS1974}, correspondingly)
\eq{
L_p = (L_{1,r},L_2)_{2-2/p,p}\quad \hbox{and}\quad L_p = (\mathcal{H}_1,L_2)_{2-2/p,p},\qquad p\in (1,2),
}
lead to the estimate
\eq{
\|M\|_{L_{p}\to L_p}\lesssim \|M\|_{\mathcal{H}_1\to L_{1,r}}^{2/p-1} \|M\|_{L_2\to L_2}^{2-2/p} \lesssim \Big(\sup\limits_{t > 0}\|\Phi(\cdot) m(t\fdot)\|_{B_2^{d/2,1}}\Big)^{2/p-1}
}
for any~$p\in(1,2)$  (the constant depends on~$p$), provided~$m$ is uniformly bounded. Since the symbol of~$A_\lambda$ is homogeneous of order zero, Theorem~\ref{Inequality} follows from the multiplier theorem above and the lemma below.
\begin{Le}
Let~$\Phi$ be a smooth function compactly supported in~$\R^d\setminus \{0\}$. Then,
\eq{
\Big\|\Phi(\xi)e^{i\lambda \varphi(\xi/|\xi|)}\Big\|_{B_2^{d/2,1}}\lesssim |\lambda|^{d/2},\qquad |\lambda| \geq 1.
}
\end{Le}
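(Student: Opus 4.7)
The plan is to reduce the Besov estimate to two ordinary Sobolev bounds, one at regularity $0$ and one at a sufficiently high integer regularity, and then recover $B_2^{d/2,1}$ by real interpolation.

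First, I would collect the two extreme bounds. Since $\Phi$ is smooth and compactly supported, and $e^{i\lambda \varphi(\xi/|\xi|)}$ has modulus one, we have
\eq{
\bigl\|\Phi(\xi)e^{i\lambda \varphi(\xi/|\xi|)}\bigr\|_{L_2}\leq \|\Phi\|_{L_2}\lesssim 1.
}
Pick an integer $k>d/2$ (say $k=d$). Because $\supp \Phi \subset \R^d\setminus\{0\}$, the function $\xi\mapsto \varphi(\xi/|\xi|)$ is smooth on $\supp \Phi$; applying the Leibniz and chain rules, every derivative of order $\leq k$ of $\Phi(\xi)e^{i\lambda\varphi(\xi/|\xi|)}$ is a sum of terms of the form $\lambda^{j}\cdot P_j(\xi)\cdot e^{i\lambda\varphi(\xi/|\xi|)}$, where $0\leq j\leq k$ and $P_j$ is a smooth function supported in $\supp \Phi$ (independent of $\lambda$). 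Taking $L_2$ norms and using $|\lambda|\geq 1$ gives the crude but sufficient bound
\eq{
\bigl\|\Phi(\xi)e^{i\lambda \varphi(\xi/|\xi|)}\bigr\|_{H^{k}}\lesssim |\lambda|^{k}.
}

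Next I would invoke the standard real interpolation identity (see Theorem~6.2.4 in~\cite{BerghLofstrom1976})
\eq{
(L_2,H^{k})_{\theta,1}=B_2^{\theta k,1}
}
with the choice $\theta=d/(2k)\in(0,1)$, which lands exactly at $B_2^{d/2,1}$. For real interpolation with second index $1$ one has the explicit convexity inequality
\eq{
\|f\|_{(X_0,X_1)_{\theta,1}}\leq \Bigl(\tfrac{1}{\theta}+\tfrac{1}{1-\theta}\Bigr)\|f\|_{X_0}^{1-\theta}\|f\|_{X_1}^{\theta},
}
obtained by splitting the defining integral of the $K$-functional at $t=\|f\|_{X_0}/\|f\|_{X_1}$ and using $K(t,f)\leq \min(\|f\|_{X_0},t\|f\|_{X_1})$. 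Combining this with the two bounds above yields
\eq{
\bigl\|\Phi(\xi)e^{i\lambda \varphi(\xi/|\xi|)}\bigr\|_{B_2^{d/2,1}}\lesssim 1^{1-\theta}\cdot |\lambda|^{k\theta}=|\lambda|^{d/2},
}
as desired.

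The only genuinely technical step is the $H^{k}$ bound, but it is routine once one notes that derivatives of $\varphi(\xi/|\xi|)$ on $\supp \Phi$ are bounded uniformly in $\lambda$; the critical point is simply that no derivative ever falls on $\xi\mapsto \xi/|\xi|$ at the origin. A slicker, interpolation-free alternative is to split the Littlewood--Paley sum $\sum_j 2^{jd/2}\|\Delta_j f_\lambda\|_{L_2}$ at the threshold $2^J\asymp |\lambda|$, using $\|\Delta_j f_\lambda\|_{L_2}\lesssim 1$ for $j\leq J$ and $\|\Delta_j f_\lambda\|_{L_2}\lesssim 2^{-jk}|\lambda|^k$ for $j>J$; both halves then equal $|\lambda|^{d/2}$ up to a constant. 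I expect the interpolation version to be the cleanest to write.
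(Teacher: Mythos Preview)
Your proof is correct and follows essentially the same route as the paper: bound $\|M_\lambda\|_{L_2}\lesssim 1$ and $\|M_\lambda\|_{W_2^{d}}\lesssim |\lambda|^{d}$ by direct differentiation, then invoke the real-interpolation identity $(L_2,W_2^{d})_{1/2,1}=B_2^{d/2,1}$ from Bergh--L\"ofstr\"om to get the multiplicative inequality $\|G\|_{B_2^{d/2,1}}\lesssim \|G\|_{L_2}^{1/2}\|G\|_{W_2^{d}}^{1/2}$. Your Littlewood--Paley alternative (splitting the dyadic sum at $2^J\asymp|\lambda|$) is also essentially the elementary argument the paper supplies in its appendix.
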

\begin{proof}
Let
\eq{
M_\lambda(\xi) = \Phi(\xi)e^{i\lambda \varphi(\xi/|\xi|)},\qquad \xi \in \R^d.
}
The estimtate
\eq{\label{LinftyEstimate}
\|\partial^\alpha M_\lambda\|_{L_{\infty}} \lesssim |\lambda|^{|\alpha|},
}
where~$\alpha$ is an arbitrary multiindex, follows from direct differentiation. Since~$\Phi$ is compactly supported,~\eqref{LinftyEstimate} yields
\eq{
\|M_\lambda\|_{W_2^{d}} \lesssim |\lambda|^{d},\quad |\lambda| \geq 1.
}
Therefore, it suffices to prove the multiplicative bound
\eq{\label{BesovInterp}
\|G\|_{B_2^{d/2,1}}\lesssim \|G\|_{L_2}^\frac12\|G\|_{W_2^d}^\frac12.
}
A slightly simpler estimate
\eq{
\|G\|_{W_2^{d/2}}\lesssim \|G\|_{L_2}^\frac12\|G\|_{W_2^d}^\frac12
}
follows from the Cauchy--Schwarz inequality. The inequality~\eqref{BesovInterp} is a little bit more tricky. It follows from the interpolation formula
\eq{
(L_2, W_2^{d})_{\frac12,1} = B_2^{d/2,1}
}
(see~\cite{BerghLofstrom1976}, Theorem~$6.2.4$). We provide an elementary proof of~\eqref{BesovInterp} in the appendix.
\end{proof}

\section{Appendix}
Let~$B_r(x)$ be the Euclidean ball centered at~$x\in \R^d$ and of radius~$r>0$. Let
\eq{
a_k = \|\hat{G}\|_{L_2(B_{2^{k}}(0)\setminus B_{2^{k-1}}(0))},\quad k \geq 1,
}
and~$a_0 = \|\hat{G}\|_{L_2(B_1(0))}$.
Then,
\eq{
\|G\|_{L_2} = \Big(\sum\limits_{k\geq 0}a_k^2\Big)^\frac12,\quad \|G\|_{W_2^d}\asymp \Big(\sum\limits_{k\geq 0}2^{2dk}a_k^2\Big)^\frac12,\quad \hbox{and}\quad \|G\|_{B_2^{d/2,1}} = \sum\limits_{k\geq 0} 2^{dk/2}a_k.
}
With the notation~$A = 2^{d/2}$,~\eqref{BesovInterp} is rewritten as
\eq{\label{RaisedFourth}
\sum\limits_{k \geq 0} A^{k}a_k \lesssim \Big(\sum\limits_{k\geq 0}a_k^2\Big)^\frac14 \Big(\sum\limits_{k\geq 0}A^{4k}a_k^2\Big)^\frac14.
}
We raise the expression on the left hand side to the fourth power, use the AM-GM inequality together with summation of geometric series:
\mlt{
\sum\limits_{p,q,r,s \geq 0}A^{p+q+r+s}a_pa_qa_ra_s = 24\sum\limits_{p\geq q\geq r \geq s}A^{p+q+r+s}a_p a_q a_r a_s\leq \\
12\sum\limits_{p\geq q\geq r\geq s} \Big(A^{-p+3q+r+s}a_q^2a_s^2 + A^{3p-q+r+s} a_p^2a_r^2\Big)\leq \\
12\sum\limits_{q,s}a_q^2a_s^2A^{3q+s}\sum\limits_{\genfrac{}{}{0pt}{-2}{p\geq q}{r\leq q}}A^{-p+r} + 12\sum\limits_{p,r}a_p^2a_r^2A^{3p+r}\sum\limits_{\genfrac{}{}{0pt}{-2}{q\geq r}{s\leq r}}A^{-q+s} \lesssim_A\\
\Big(\sum\limits_{q,s}a_q^2a_s^2A^{3q+s}+\sum\limits_{p,r}a_p^2a_r^2A^{3p+r}\Big)\lesssim\\ \sum\limits_{k,l}a_k^2a_l^2A^{4l} =  \Big(\sum\limits_{k\geq 0}a_k^2\Big) \Big(\sum\limits_{k\geq 0}A^{4k}a_k^2\Big),
}
and arrive at the fourth power of the right hand side of~\eqref{RaisedFourth}.
\bibliography{mybib}{}
\bibliographystyle{amsplain}

Dmitriy Stolyarov

St. Petersburg State University, 14th line 29B, Vasilyevsky Island, St. Petersburg, Russia.

d.m.stolyarov@spbu.ru.

\end{document}